\theoremstyle{plain}
\newtheorem{thm}{Theorem}
\newtheorem{lem}[thm]{Lemma}
\newtheorem{cor}[thm]{Corollary}
\theoremstyle{definition}
\title{Independence of Essential Sets in Finite Implication Bases}
\author{Todd Bichoupan}
\date{April 13, 2023}
\begin{document}

\begin{abstract}
A new characterization is given to describe implication bases of a closure system in terms of the system's quasi-closed sets. Using this characterization, it is possible to show that groups of implications corresponding to distinct essential sets are interchangeable across different bases. It follows from this result that the sum of cardinalities of right sides of all implications corresponding to a single essential set in an optimal basis is fixed, solving an open conjecture by K. Adaricheva and J.B. Nation in 2014. These results provider greater insight into the global structure of implication bases.
\end{abstract}

\maketitle

Definitions and notational conventions are borrowed from \cite{CaMj03}. \newline

A \emph{closure system} $\langle X, \phi \rangle$ is a nonempty set $X$ equipped with a \emph{closure operator} $\phi: \mathcal{P}(X) \mapsto \mathcal{P}(X)$ that satisfies the following for all $A, B \subseteq X$:
\begin{itemize}
    \item $A \subseteq \phi(A)$
    \item $A \subseteq B \implies \phi(A) \subseteq \phi(B)$
    \item $\phi(\phi(A))=\phi(A)$
\end{itemize}
A set $A \subseteq X$ is \emph{closed} if $A = \phi(A)$. $X$ itself is closed, and any intersection of closed sets is closed. The family of closed sets associated with a closure system is unique in the sense that any two distinct closure operators on a set $X$ generate distinct families of closed sets. Moreover, any family of subsets of $X$ that is closed under set intersection and contains $X$ is the family of closed sets associated with some closure operator. \newline

An \emph{implication} on a nonempty set $X$ is an ordered pair of sets $A, B \subseteq X$ denoted $A \rightarrow B$. A set $S \subseteq X$ obeys an implication $A \rightarrow B$ if $A \not \subseteq S$ or $B \subseteq S$. For any set $\Sigma$ of implications on $X$, the family of sets that obey all implications in $\Sigma$ form a closure system, $\langle X, \phi \rangle$, and $\Sigma$ is said to be an \emph{implication basis} for $\langle X, \phi \rangle$. \newline

If $\langle X, \phi \rangle$ is a closure system and $F$ is the associated family of closed sets, then a set $Q \subseteq X$ is \emph{quasi-closed} if $Q \not \in F$ and $F \cup \{ Q \}$ is still closed under set intersection (i.e., for every $S \in F$, $S \supseteq Q$ or $S \cap Q \in F$). For any quasi-closed set $Q$, the closure $\phi(Q)$ is called an \emph{essential set}. If $\mathcal{Q}$ is the family of all quasi-closed sets associated with $\langle X, \phi \rangle$, then $F \cup \mathcal{Q}$ turns out to be closed under set intersection and has an associated closure operator, $\sigma$; $\sigma$ is called the \emph{saturation operator} associated with $\langle X, \phi \rangle$. \newline

A \emph{quasi-closed} set $Q$ associated with a closure system $\langle X, \phi \rangle$ is called a \emph{critical set} if there is no quasi-closed set $S \subsetneq Q$ such that $\phi(S)=\phi(Q)$. In \cite{GuDq86}, J.L. Guigues and V. Duquenne showed that if $X$ is finite, the set of implications $\{C \rightarrow \phi(C): C \text{ is critical}\}$ is an implication basis for $\langle X, \phi \rangle$. Guigues and Duquenne showed further that if $\Sigma$ is an implication basis for $\langle X, \phi \rangle$, where $X$ is finite, and $\sigma$ is the saturation operator associated with $\langle X, \phi \rangle$, then for every critical set $C$ there must be an implication $A \rightarrow B$ in $\Sigma$ such that $\sigma(A)=C$. In particular, $\phi(A)$ is an essential set. \newline

A stronger characterization of the implication bases associated with a closure system is given. In particular, it is possible to more precisely describes the right sides of implications in a basis. This characterization can be used to show that groups of implications corresponding to distinct essential sets are independent, in the sense that those groups of implications can be combined arbitrarily to form a valid basis. This result resolves a Conjecture 67 of \cite{AdN14} about the right sides of \emph{optimal bases} – implication bases where the sum of all cardinalities of the left and right sides of all implications is minimal. \newline

\noindent
The following lemma is equivalent to Proposition 19 of \cite{CaMj03}.
\begin{lem}\label{QuasInc}
Let $X$ be a finite set and let $\langle X, \phi_1 \rangle$ and $\langle X, \phi_2 \rangle$ be two closure systems on $X$. Let $F_1$ be the family of closed sets associated with $\langle X, \phi_1 \rangle$ and let $F_2$ be the family of closed sets associated with $\langle X, \phi_2 \rangle$. Suppose that $F_1 \subsetneq F_2$. Then there exists a a quasi-closed set $Q$ associated with $\langle X, \phi_1 \rangle$ such that $Q \in F_2$.
\end{lem}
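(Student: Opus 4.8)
The plan is to produce the desired quasi-closed set by a minimality argument. Since $F_1 \subsetneq F_2$, the difference $F_2 \setminus F_1$ is nonempty, and because $X$ is finite the power set $\mathcal{P}(X)$ is finite; hence $F_2 \setminus F_1$ contains at least one element that is minimal with respect to inclusion. I would fix such a minimal element $Q$ and claim that it is quasi-closed with respect to $\langle X, \phi_1 \rangle$.

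To verify the claim I would check the two requirements in the definition of a quasi-closed set for $\langle X, \phi_1 \rangle$. First, $Q \notin F_1$ is immediate from $Q \in F_2 \setminus F_1$. Second, I need that $F_1 \cup \{Q\}$ is closed under intersection, which amounts to showing that for every $S \in F_1$ either $Q \subseteq S$ or $S \cap Q \in F_1$. The observation driving the argument is that $F_1 \subseteq F_2$ forces $S \in F_2$, and since $F_2$ is closed under intersection (being the family of closed sets of $\langle X, \phi_2 \rangle$), the set $S \cap Q$ again lies in $F_2$.

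The verification then splits into two cases according to whether $S \cap Q$ equals $Q$ or is a proper subset of it. If $S \cap Q = Q$, then $Q \subseteq S$ and the condition holds. Otherwise $S \cap Q \subsetneq Q$ with $S \cap Q \in F_2$; by the minimality of $Q$ in $F_2 \setminus F_1$, no proper subset of $Q$ can lie in $F_2 \setminus F_1$, so $S \cap Q$ must belong to $F_1$, which again satisfies the condition. Hence $Q$ is quasi-closed with respect to $\langle X, \phi_1 \rangle$ and lies in $F_2$, as required.

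I expect the only genuine subtlety to be the choice of \emph{minimality} rather than maximality: it is precisely minimality that converts a proper intersection $S \cap Q$, known a priori only to belong to $F_2$, into a member of $F_1$. Everything else — the existence of the minimal element from finiteness, and the closure of $F_2$ under intersection — is routine, so there is no serious obstacle once the minimal element is selected correctly.
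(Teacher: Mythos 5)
Your proposal is correct and follows the same argument as the paper: select an inclusion-minimal element $Q$ of $F_2 \setminus F_1$ (which exists by finiteness) and use the closure of $F_2$ under intersection together with minimality to show that every $S \cap Q$ with $S \in F_1$ is either $Q$ itself or lands in $F_1$. Your write-up simply spells out the case split and the role of minimality in more detail than the paper does.
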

\begin{proof}
Since $X$ is finite, we may let $A$ be a member of $F_2 \setminus F_1$ such that no subset of $A$ is in $F_2 \setminus F_1$. Then for all $B \in F_1$, $A \cap B$ either equals $A$ or is in $F_1$, so $A$ is a quasi-closed set of $\langle X, \phi_1 \rangle$. \newline
\end{proof}

\noindent
The next lemma establishes a connection between quasi-closed sets and the right sides of implications.
\begin{lem}\label{QuasBasis}
Let $X$ be a finite set, let $\langle X, \phi \rangle$ be a closure system on $X$, and let $F$ be the family of closed sets associated with $\langle X, \phi \rangle$. Let $\Sigma$ be a set of implications on $X$. Let $F_\Sigma$ be the family of closed sets associated with $\Sigma$. Then $F_\Sigma = F$ if and only if the following two conditions hold:
\begin{enumerate}
    \item For every implication $(A \rightarrow B) \in \Sigma$, $B \subseteq \phi(A)$.
    \item For every quasi closed set $Q$ associated with $\langle X, \phi \rangle$, there exists an implication $(A \rightarrow B) \in \Sigma$ such that $A \subseteq Q$ and $B \not \subseteq Q$.
\end{enumerate}
\end{lem}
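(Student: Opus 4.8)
The plan is to prove the two directions of the biconditional separately, reducing both to one clean observation: that Condition 1 is exactly equivalent to the containment $F \subseteq F_\Sigma$. I would record this auxiliary equivalence first. For its ``only if'' half, given $(A \rightarrow B) \in \Sigma$, note that $\phi(A) \in F$; if $F \subseteq F_\Sigma$ then $\phi(A)$ obeys $A \rightarrow B$, and since $A \subseteq \phi(A)$, obedience forces $B \subseteq \phi(A)$, which is Condition 1. For the ``if'' half, assume Condition 1 and take any $S \in F$; for each $(A \rightarrow B) \in \Sigma$ with $A \subseteq S$ we get $B \subseteq \phi(A) \subseteq \phi(S) = S$, so $S$ obeys every implication and lies in $F_\Sigma$, giving $F \subseteq F_\Sigma$.

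For the forward direction ($F_\Sigma = F$ implies Conditions 1 and 2): Condition 1 is immediate from the observation, since $F = F_\Sigma$ yields $F \subseteq F_\Sigma$. For Condition 2, let $Q$ be quasi-closed; by definition $Q \notin F = F_\Sigma$, so $Q$ fails to obey some $(A \rightarrow B) \in \Sigma$, which by the definition of obedience means precisely $A \subseteq Q$ and $B \not\subseteq Q$.

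For the backward direction (Conditions 1 and 2 imply $F_\Sigma = F$): Condition 1 already gives $F \subseteq F_\Sigma$ via the observation, so it suffices to rule out strict containment. Suppose for contradiction that $F \subsetneq F_\Sigma$. Since $F_\Sigma$ is a family of subsets of $X$ closed under intersection and containing $X$, it is the closed-set family of some closure operator, so Lemma \ref{QuasInc} applies with $F_1 = F$ and $F_2 = F_\Sigma$; it yields a quasi-closed set $Q$ of $\langle X, \phi \rangle$ with $Q \in F_\Sigma$. But Condition 2 supplies an implication $(A \rightarrow B) \in \Sigma$ with $A \subseteq Q$ and $B \not\subseteq Q$, so $Q$ does not obey $\Sigma$ and cannot lie in $F_\Sigma$, a contradiction. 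Hence $F = F_\Sigma$.

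The main obstacle is the containment $F_\Sigma \subseteq F$ in the backward direction: a direct attack, showing every $\Sigma$-closed set is $\phi$-closed, is awkward because there is no immediately available structure on an arbitrary $\Sigma$-closed set. The device that makes it clean is Lemma \ref{QuasInc}, which converts a strict containment of closed-set families into the existence of a single quasi-closed witness inside the larger family; Condition 2 is tailored exactly to exclude such a witness. I would be careful to verify the hypotheses of Lemma \ref{QuasInc} before invoking it, namely the finiteness of $X$ and the fact that $F_\Sigma$ genuinely arises from a closure operator, so that the quasi-closedness it produces is with respect to $\langle X, \phi \rangle$ and therefore interacts correctly with Condition 2.
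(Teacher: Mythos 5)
Your proposal is correct and follows essentially the same route as the paper: Condition (1) is shown to give $F \subseteq F_\Sigma$ by the same computation $B \subseteq \phi(A) \subseteq \phi(S) = S$, and the containment $F_\Sigma \subseteq F$ is obtained by invoking Lemma \ref{QuasInc} together with Condition (2) to exclude a quasi-closed witness, exactly as in the paper. Your only departures are organizational --- isolating the equivalence of Condition (1) with $F \subseteq F_\Sigma$ as a named observation and phrasing the converse direction directly rather than contrapositively --- which if anything makes the argument slightly more explicit.
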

\begin{proof}
If condition (1) holds, then for any $S \in F$ and any $(A \rightarrow B) \in \Sigma$, $A \subseteq S \implies \phi(A) \subseteq S \implies B \subseteq S$, so $S \in F_\Sigma$; then it follows that $F \subseteq F_\Sigma$. Condition (2) implies that for any quasi-closed set $Q$ associated with $F$, $Q \not \in F_\Sigma$. Then by Lemma \ref{QuasInc}, $F_\Sigma$ is not a proper super set of $F$, so $F_\Sigma = F$.

The reverse direction is straightforward. If condition (1) fails and $A \rightarrow B$ is an implication in $\Sigma$ such that $B \not \subseteq \phi(A)$, then $\phi(A) \not \in F_\Sigma$ (and $\phi(A) \in F$). If condition (2) fails and $Q$ is a quasi-closed set associated with $\langle X, \phi \rangle$ such that for all $(A \rightarrow B) \in \Sigma$, $A \subseteq Q \implies B \subseteq Q$, then $Q \in F_\Sigma$ (and $Q \not \in F$).
\newline
\end{proof}

\noindent
The following theorem establishes a form of independence between implications corresponding to distinct essential sets.
\begin{thm} \label{MixBasis}
Let $\langle X, \phi \rangle$ be a finite closure system, let $E_1, ..., E_n$ be the essential sets of $\langle X, \phi \rangle$, and let $\Sigma_1, ..., \Sigma_n$ be implication bases for $\langle X, \phi \rangle$. For each $i$, let $\Sigma_i' = \{(A \rightarrow B) \in \Sigma_i : \phi(A) = E_i\}$. Let $\Sigma = \cup \Sigma_i'$. Then $\Sigma$ is a valid implication basis of $\langle X, \phi \rangle$.
\end{thm}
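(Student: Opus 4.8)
The plan is to invoke Lemma~\ref{QuasBasis} directly: to show $\Sigma$ is a valid basis for $\langle X, \phi \rangle$ it suffices to verify the two conditions of that lemma for $\Sigma$. Condition (1) should be immediate. Every implication of $\Sigma$ lies in some $\Sigma_i' \subseteq \Sigma_i$, and since $\Sigma_i$ is itself a valid basis, applying the forward direction of Lemma~\ref{QuasBasis} to $\Sigma_i$ gives $B \subseteq \phi(A)$ for every $(A \rightarrow B) \in \Sigma_i$. This property is inherited by the subset $\Sigma_i'$ and hence by $\Sigma$, so condition (1) holds with no further work.

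The substance of the argument is condition (2). First I would fix an arbitrary quasi-closed set $Q$ of $\langle X, \phi \rangle$. Since $\phi(Q)$ is by definition an essential set, we have $\phi(Q) = E_i$ for some index $i$. Because $\Sigma_i$ is a valid basis, Lemma~\ref{QuasBasis} applied to $\Sigma_i$ supplies an implication $(A \rightarrow B) \in \Sigma_i$ with $A \subseteq Q$ and $B \not\subseteq Q$. This implication witnesses condition (2) for $\Sigma_i$, but to conclude it also witnesses condition (2) for $\Sigma$ I must show that it actually lies in $\Sigma_i'$, i.e.\ that $\phi(A) = E_i$.

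I expect the main obstacle to be exactly this last point: verifying $\phi(A) = E_i = \phi(Q)$. One inclusion is free, since $A \subseteq Q$ forces $\phi(A) \subseteq \phi(Q)$. For the reverse, I would argue as follows. Since condition (1) gives $B \subseteq \phi(A)$ while $B \not\subseteq Q$, we must have $\phi(A) \not\subseteq Q$. Now apply the defining property of the quasi-closed set $Q$ to the closed set $\phi(A)$: either $\phi(A) \supseteq Q$, or $\phi(A) \cap Q$ is closed. In the second case, $A \subseteq \phi(A) \cap Q$ and closedness would give $\phi(A) \subseteq \phi(A) \cap Q \subseteq Q$, contradicting $\phi(A) \not\subseteq Q$. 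Hence $\phi(A) \supseteq Q$, so $\phi(Q) \subseteq \phi(\phi(A)) = \phi(A)$, and therefore $\phi(A) = \phi(Q) = E_i$. This places $(A \rightarrow B)$ in $\Sigma_i' \subseteq \Sigma$, establishing condition (2) for $\Sigma$. With both conditions verified, Lemma~\ref{QuasBasis} yields that $\Sigma$ is a valid implication basis. The only delicate step is the quasi-closedness case analysis above; the rest is bookkeeping over the bases $\Sigma_i$.
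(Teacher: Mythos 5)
Your proposal is correct and follows essentially the same route as the paper: verify condition (1) of Lemma~\ref{QuasBasis} by inheritance from the $\Sigma_i$, then for a quasi-closed $Q$ with $\phi(Q)=E_i$ pull an implication from $\Sigma_i$ and show $\phi(A)=E_i$ so that it lies in $\Sigma_i'$. Your case analysis using the defining property of quasi-closed sets is exactly the step the paper compresses into ``since $Q$ is quasi-closed it follows that $\phi(A)=\phi(Q)$,'' so you have simply made the paper's argument more explicit.
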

\begin{proof}
Since every implication in $\Sigma$ is included in another valid implication basis of $\langle X, \phi \rangle$, $\Sigma$ satisfies condition (1) of Lemma \ref{QuasBasis}. Let $Q$ be a quasi closed set associated with $\langle X, \phi \rangle$, and let $E_i = \phi(Q)$. By Lemma \ref{QuasBasis}, we may let $A \rightarrow B$ be an implication in $\Sigma_i$ such that $A \subseteq Q$ and $B \not \subseteq Q$. Then $\phi(A) \not \subseteq Q$, and since $Q$ is quasi-closed it follows that $\phi(A) = \phi(Q) = E_i$. Therefore $(A \rightarrow B) \in \Sigma$. It now follows that $\Sigma$ satisfies condition (2) of Lemma \ref{QuasBasis}, so $\Sigma$ is a valid implication basis for $\langle X, \phi \rangle$. \newline
\end{proof}

\noindent
The following corollary resolves Conjecture 67 of \cite{AdN14}.
\begin{cor} \label{OptRight}
Let $\langle X, \phi \rangle$ be a finite closure system, let $E$ be an essential set of $\langle X, \phi \rangle$, and let $\Sigma$ be an optimal basis for $\langle X, \phi \rangle$. If $A_1 \rightarrow B_1, ..., A_n \rightarrow B_n$ are all the implications in $\Sigma$ where $\phi(A_i)=E$, then $s=|B_1|+...+|B_n|$ is fixed (i.e., $s$ does not depend on the choice of $\Sigma$).
\end{cor}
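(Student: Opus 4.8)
The plan is to fix an essential set $E$, take two arbitrary optimal bases $\Sigma$ and $\Sigma'$, and prove that the right-side sum attached to $E$ is the same for both; since optimal bases all share the same total size, it suffices to compare any two of them.

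First I would reduce each optimal basis to a clean partition indexed by essential sets. I claim that in an optimal basis every implication $A \rightarrow B$ has $\phi(A)$ equal to some essential set. Indeed, if $A \rightarrow B$ is the witness required by condition (2) of Lemma \ref{QuasBasis} for some quasi-closed $Q$, then $A \subseteq Q$ and $B \not\subseteq Q$; since $B \subseteq \phi(A)$ by condition (1) and $Q$ is quasi-closed, the same case analysis used in the proof of Theorem \ref{MixBasis} forces $\phi(A) = \phi(Q)$, which is essential. If instead $A \rightarrow B$ witnesses no quasi-closed set, then $\Sigma \setminus \{A \rightarrow B\}$ still satisfies condition (2) and hence is a valid basis; for any implication of positive size this contradicts optimality, and the only exception $\emptyset \rightarrow \emptyset$ contributes nothing to any right-side sum and may be discarded. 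Thus the implications of $\Sigma$ partition according to $\phi(A)$, and likewise for $\Sigma'$.

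Next I would run the group-swapping argument powered by Theorem \ref{MixBasis}. For each essential set $E_i$ set $c_i(\Sigma) = \sum_{\phi(A) = E_i}(|A| + |B|)$, the combined cost of the $E_i$-group of $\Sigma$, and define $c_i(\Sigma')$ similarly. Applying Theorem \ref{MixBasis} with $\Sigma_i = \Sigma$ for $i \neq j$ and $\Sigma_j = \Sigma'$ produces the valid basis obtained from $\Sigma$ by replacing its $E_j$-group with that of $\Sigma'$, whose total size is $\mathrm{cost}(\Sigma) - c_j(\Sigma) + c_j(\Sigma')$. Optimality of $\Sigma$ yields $c_j(\Sigma) \leq c_j(\Sigma')$, and the symmetric swap gives the reverse inequality, so $c_j(\Sigma) = c_j(\Sigma')$ for every $j$. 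Hence the per-essential-set combined cost, left plus right, is an invariant of optimal bases.

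The main obstacle is that this invariance controls only the combined sum $\sum(|A| + |B|)$ over the $E$-group, whereas the corollary concerns $\sum|B|$ alone; since the combined sum is now fixed, this is equivalent to showing the body sum $\sum|A|$ is invariant. The two do not separate for free, because a priori two optimal bases could employ minimal generators of $E$ of differing total size, compensated by differing right-side totals. To close the gap I would first show that in an optimal basis every body $A$ is a minimal generator of $E$ — replacing $A$ by a proper subset with the same closure only enlarges the family of quasi-closed sets it witnesses while lowering cost. I would then seek a basis-independent description of $\sum|B|$, writing $\sum|B| = \sum_{x \in E}|\{k : x \in B_k\}|$ and arguing that each occurrence of $x$ in some $B_k$ is compelled by a quasi-closed set witnessed through $x$ alone by $A_k \rightarrow B_k$. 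Matching these necessary occurrences with an intrinsic feature of the poset of quasi-closed sets lying below $E$ — one referring only to $\langle X, \phi \rangle$ and $E$, not to the chosen basis — is the crux, and is where I expect the genuine difficulty to lie.
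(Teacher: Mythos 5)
Your swap argument via Theorem \ref{MixBasis} matches the paper's and correctly establishes that the combined cost $\sum_i(|A_i|+|B_i|)$ of the $E$-group is the same in any two optimal bases. But the proof is not complete: the step you yourself flag as ``the crux'' is a genuine gap, and the route you sketch for closing it --- an intrinsic, basis-independent description of $\sum_i|B_i|$ read off from the poset of quasi-closed sets below $E$ --- is not how the separation is achieved and would be considerably harder. The paper instead makes $\sum_i|A_i|$ basis-independent directly: by a theorem of Wild (Theorem 5(c) of the cited reference), in an optimal basis each left side $A_i$ has \emph{minimum cardinality among all sets with the same saturation} $\sigma(A_i)$, and the implications in the $E$-group of any optimal basis correspond bijectively to the critical sets $C$ with $\phi(C)=E$, matched by $\sigma(A_i)=C$. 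Hence $\sum_i|A_i|$ over the $E$-group equals the sum, over critical sets $C$ with $\phi(C)=E$, of the minimum cardinality of a set saturating to $C$ --- a quantity depending only on $\langle X,\phi\rangle$ and $E$. Subtracting this from the swap-invariant combined cost immediately yields the invariance of $s=\sum_i|B_i|$.

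Note also that your intermediate claim --- that every body in an optimal basis is a \emph{minimal} generator, i.e.\ has no proper subset with the same closure --- is strictly weaker than what is needed: two minimal generators of the same critical set can have different cardinalities, so minimality alone does not pin down $\sum_i|A_i|$. The statement you need is minimum cardinality within the saturation class, together with the one-implication-per-critical-set correspondence; without importing that result (or reproving it), the argument does not close.
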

\begin{proof}
Let $\sigma$ be the saturation operator associated with $\langle X, \phi \rangle$. Let $\Sigma'$ be another optimal basis for $\langle X, \phi \rangle$ where $A_1' \rightarrow B_1', ..., A_n' \rightarrow B_n'$ are all the implications in $\Sigma'$ with $\phi(A_i')=E$ and $\sigma(A_i')=\sigma(A_i)$. For each $i$, $|A_i'|=|A_i|$ because $A_i$ and $A_i'$ have minimal cardinality among all sets with saturation equal to $\sigma(A_i)$ \cite[Theorem 5 (c)]{Wild94}. Let $s' = |B_1'| + ... + |B_n'|$ and assume without loss of generality that $s' \leq s$. Let $\Sigma''=\{A \rightarrow B \mid (A \rightarrow B) \in \Sigma' \land \phi(A) = E\} \cup \{A \rightarrow B \mid (A \rightarrow B) \in \Sigma \land \phi(A) \neq E\}$. By Theorem \ref{MixBasis}, $\Sigma''$ is a valid basis for $\langle X, \phi \rangle$. By construction, the size (i.e. the sum of all cardinalities of the left and right sides each implication) of $\Sigma''$ is no greater than the size of $\Sigma$. But $\Sigma$ is optimal, so the size of $\Sigma''$ must equal the size of $\Sigma$, and it follows that $s' = s$.
\end{proof}

\end{document}